\documentclass[11pt, reqno, twoside, letterpaper]{amsart}

\usepackage[
  letterpaper, twoside,
  inner=1.35in, outer=1.40in,
  top=1.25in,   bottom=1.25in,
  headsep=16pt, footskip=30pt,
]{geometry}

\usepackage{amsmath}
\usepackage{mathtools}

\usepackage{libertinus}
\usepackage{microtype}

\makeatletter
\g@addto@macro\normalsize{%
  \setlength\abovedisplayskip{13pt plus 3pt minus 4pt}%
  \setlength\belowdisplayskip{13pt plus 3pt minus 4pt}%
  \setlength\abovedisplayshortskip{0pt plus 3pt}%
  \setlength\belowdisplayshortskip{9pt plus 3.5pt minus 3pt}%
}
\makeatother

\usepackage{xcolor}
\usepackage{enumitem}
\usepackage{etoolbox}
\usepackage{csquotes}       

\usepackage{cite}

\numberwithin{equation}{section}

\theoremstyle{plain}
\newtheorem{X}{X}[section]
\newtheorem{theorem}[X]{Theorem}
\newtheorem{lemma}[X]{Lemma}

\newtheorem{proposition}[X]{Proposition}
\newtheorem*{theorem*}{Theorem}

\theoremstyle{definition}

\theoremstyle{remark}
\newtheorem{remark}[X]{Remark}
\newtheorem*{remark*}{Remark}

\allowdisplaybreaks[1]

\renewcommand{\le}{\leqslant}
\renewcommand{\ge}{\geqslant}

\newcommand{\proofstep}[1]{%
  \par\medskip
  \noindent\emph{#1.}
}

\makeatletter
\apptocmd{\thebibliography}{%
  \raggedright
  \@rightskip=\z@ \@plus 3em
  \rightskip=\@rightskip
  \parfillskip=\z@ \@plus 1fil
  \frenchspacing
}{}{\PackageWarning{preamble}{Could not patch thebibliography}}
\makeatother

\newenvironment{turn}[1]
  {\par\addvspace{0.7\baselineskip}%
   \noindent{\small\textsc{#1}}\par\nobreak\vspace{0.2\baselineskip}%
   \begingroup
   \small
   \leftskip=1.5em
   \rightskip=0pt
   \parindent=0pt
   \parskip=\medskipamount
   \relax}
  {\par\endgroup\addvspace{0.5\baselineskip}}

\usepackage{hyperref}
\hypersetup{
  unicode=true,
  pdftitle={Long runs of integers with small prime factors and the divisor function of n factorial},
  pdfauthor={Tristan Freiberg},
  pdfsubject={Number theory},
  pdfkeywords={prime gaps, smooth numbers, divisor function, factorials},
  pdfstartview={FitH},
  pdfmenubar=false,
  pdffitwindow=false,
  pdfnewwindow=true,
  bookmarksnumbered=true,
  linktoc=all,
  colorlinks=true,
  linkcolor={black},
  citecolor={black},
  filecolor={black},
  urlcolor={black},
}
\newcommand{\DOI}[1]{\href{https://doi.org/#1}{doi:#1}}
\newcommand{\ARXIV}[1]{\href{https://doi.org/10.48550/arXiv.#1}{arXiv:#1}}

\title[Long runs of integers with small prime factors]
      {Long runs of integers with small prime factors \\
       and the divisor function of $n!$}
\author[T. Freiberg]{Tristan Freiberg}
\address{Montr\'eal, Canada}
\subjclass[2020]{Primary 11N05, 11N25; Secondary 11N36, 11N37, 11N56}
\keywords{prime gaps, smooth numbers, divisor function, factorials, Erd\H{o}s--Rankin construction}
\date{\today}

\thanks{The core argument of Section~\ref{sec:proof-f-bound} was generated during an interaction with a generative-AI system. The author has independently verified the argument and takes full responsibility for the mathematical contents of this paper. See Section~\ref{sec:ai-provenance} for details and Appendix~\ref{app:record} for documentation of the interaction.}

\begin{document}

\begin{abstract}
Let $d$ be the divisor function, and let $K(n)$ be the least positive integer $K$ for which $d((n + K)!) \ge 2d(n!)$. Erd\H{o}s, Graham, Ivi\'c and Pomerance proved that, for infinitely many $n$,
\begin{equation*}
K(n) > (1/9)(\log n)(\log_{2} n)(\log_{4} n)/(\log_{3} n)^3.
\end{equation*} 
We improve upon this by a factor of order $\log_{3} n$, which brings the bound to the same order as Rankin's 1938 lower bound for gaps between consecutive primes. The two problems are closely related, but a long prime-free interval does not by itself produce a large value of $K(n)$: what is needed is a weighted variant of the Erd\H{o}s--Rankin construction. We follow the method of Erd\H{o}s, Graham, Ivi\'c and Pomerance, replacing a key estimate by an averaging argument that permits some integers to remain uncovered. This improvement was formulated and proved during a private interaction with Claude Fable 5.1, a publicly available generative-AI system; the argument is verified and presented here by the author.
\end{abstract}

\maketitle

\section{Introduction}
 \label{sec:intro}

The number of positive divisors $d(n)$ of a positive integer $n$ can fluctuate sharply between consecutive integers. An application of Dirichlet's theorem on primes in arithmetic progressions gives
\begin{equation}
\label{eq:d-ratio-limits}
\liminf_{n \to \infty} \frac{d(n + 1)}{d(n)} = 0 \qquad \text{and} \qquad \limsup_{n \to \infty} \frac{d(n + 1)}{d(n)} = \infty.
\end{equation}

By contrast, the sequence $d(n!)$ is strictly increasing, and its successive ratios exhibit considerably more regular behavior. Erd\H{o}s, Graham, Ivi\'c and Pomerance \cite[\S 3]{EGIP1996} proved, among several results related to $d(n!)$, that
\begin{equation*}
\frac{d((n + 1)!)}{d(n!)} \to 1
\end{equation*}
as $n \to \infty$ through a set of asymptotic density $1$. On the other hand, if $n + 1$ is prime,
\begin{equation*}
\frac{d((n + 1)!)}{d(n!)} = 2.
\end{equation*}

This suggests asking how many steps are required for $d(n!)$ to double. Accordingly, define
\begin{equation}
\label{eq:k-def}
K(n) := \min\left\{K \ge 1 : \frac{d((n + K)!)}{d(n!)} \ge 2 \right\}.
\end{equation}
If $p$ is the least prime exceeding $n$, then the step from $(p - 1)!$ to $p!$ contributes a factor of $2$, so 
\begin{equation*}
K(n) \le p - n,
\end{equation*}
which shows the minimum in \eqref{eq:k-def} exists. Thus, large values of $K(n)$ can only occur when $n$ is followed by a long prime-free interval. The converse is not automatic, since the smaller increases at composite arguments may accumulate to a factor of $2$ before the next prime is reached. 

Erd\H{o}s, Graham, Ivi\'c and Pomerance \cite[Corollary 2]{EGIP1996} proved that for infinitely many integers $n$,
\begin{equation}
\label{eq:egip-k-bound}
K(n) > (\log n) \frac{(\log_{2} n)(\log_{4} n)}{9(\log_{3} n)^3}.
\end{equation}
(Here and throughout, $\log_{j}$ denotes the $j$\,th iterated logarithm; see Section~\ref{sec:notation}.) Our main result improves this lower bound by a factor of order $\log_{3} n$.
\begin{theorem}
\label{thm:main}
Let $K(n)$ be as in \eqref{eq:k-def}. Given any $\epsilon > 0$, there are infinitely many $n$ such that 
\begin{equation*}
K(n) > \left(\frac{3}{4\pi^2} - \epsilon\right)(\log n) \frac{(\log_{2} n) (\log_{4} n)}{(\log_{3} n)^2}.
\end{equation*}
\end{theorem}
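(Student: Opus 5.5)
The plan is to turn the problem into a weighted covering problem for the interval $(n, n+K]$ and then solve it with an Erd\H{o}s--Rankin construction, with one change: the uncovered integers are controlled on average rather than required to vanish.

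\proofstep{Step 1: reduction to a weighted sum}
Write $e_p(m) := v_p(m!)$. Then $d(m!) = \prod_p (e_p(m)+1)$, so
\begin{equation*}
\log\frac{d((n+K)!)}{d(n!)} = \sum_{p}\log\frac{e_p(n+K)+1}{e_p(n)+1} \le \sum_{1\le j\le K}\ \sum_{p^a \parallel n+j} \frac{a}{e_p(n)+1}.
\end{equation*}
Since $e_p(n) \ge n/p - 1$, a prime $p \mid n+j$ contributes at most about $a p/n$ to this sum. For each $j \le K$, factor $n+j = s_j t_j$, where $s_j$ is the part of $n+j$ composed of the small primes used in the construction below. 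Every prime $p \mid t_j$ satisfies $p \le t_j \le (n+K)/s_j$, so its term is $\ll 1/s_j$. The primes dividing $s_j$ are tiny compared with $n$, so their total contribution is $o(1)$ once $n$ is exponentially large in them. Finally, $\Omega(t_j) \ll \log n$, and this factor should be absorbed by a slightly sharper pointwise bound, for instance by grouping the prime factors of $t_j$ by size.

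The result of this step is a criterion: $K(n) > K$ whenever
\begin{equation*}
\sum_{j \le K} \frac{1}{s_j}\ (\text{suitably weighted}) < \log 2 - \epsilon.
\end{equation*}
This is the ``weighted Erd\H{o}s--Rankin'' condition mentioned in the abstract. A long prime-free interval alone is not enough; each $n+j$ must carry a large smooth divisor $s_j$, apart from a set of $j$ whose reciprocal weights sum to something small.

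\proofstep{Step 2: the construction}
Take $x$ large, set $K \approx c\, x\log x\,\log_3 x/(\log_2 x)^2$ with a suitable constant $c$, and let $P$ be a product of prime powers of primes up to about $x$, so that $n \approx P^{O(1)}$ and $\log n \asymp x$. Choose $n$ by the Chinese remainder theorem following Rankin's scheme:
\begin{enumerate}[label=(\roman*)]
\item for the primes in $(\log x, z]$, take $n \equiv 0$, so that the integers $n+j$ free of these primes are essentially the $z$-smooth $j$ together with the primes;
\item for the primes in $(z, x/2]$, assign residue classes greedily or at random to hit the survivors;
\item also impose high prime-power congruences, so that each $n+j$ is divisible by a large smooth modulus.
\end{enumerate}
In EGIP the covering is required to be complete, with every $n+j$ acquiring a large $s_j$. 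Here that estimate is replaced by an averaging argument. Choose the residue classes for the medium primes uniformly at random and compute the expectation of $\sum_j 1/s_j$. Survivors that receive only a modest $s_j$ are permitted, provided this expected weighted sum is at most $\log 2 - \epsilon$; some choice of residues then attains it. Giving up the requirement of a full cover is what saves the factor $\log_3 n$ relative to \eqref{eq:egip-k-bound}.

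\proofstep{Step 3: optimisation and the main obstacle}
Optimise $z$ and the sizes of the prime-power moduli. Use de Bruijn-type smooth-number estimates, with $u = \log K/\log z \asymp \log_2 x/\log_3 x$, for the $z$-smooth $j$, and Mertens-type sums for the random covering. The constant $3/(4\pi^2)$ should emerge from this step: I expect a factor $6/\pi^2$ from the density of squarefree-type local conditions in the expectation, multiplied by a Rankin-type constant of $1/8$.

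I expect the main obstacle to be the expectation computation in Step 2. One must show that integers $n+j$ which are left uncovered, or hit only by few small primes, have reciprocal smooth parts summing to $o(1)$ uniformly in the parameters. At the same time, $\Omega(t_j)$ and the contribution of prime powers must not spoil the bound from Step 1. I would try first a second-moment or Markov-type bound on the random sieve. Using Markov's inequality, I would pass from expectation to a single good choice of residues and then verify that this choice satisfies the criterion of Step 1.
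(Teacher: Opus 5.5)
Your Step~1 criterion is where the plan fails. The bound $S(t_j)\le t_j$ charges position $j$ about $1/s_j$; that is, it treats the cofactor $t_j=(n+j)/s_j$ as if it were prime. (This bound also disposes of your $\Omega(t_j)$ worry, since $S(t)\le t$ for every $t\ge 1$.) An uncovered position has $s_j=1$ and so costs about $1$. Your criterion $\sum_j 1/s_j<\log 2-\epsilon$ therefore permits no survivors at all, which is exactly the complete-cover regime you want to leave.

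Complete covering does not rescue it either. Whatever residues you choose, for each construction prime $p$ and each $a$ with $p^a\le K$, at most $K/p^a+1$ of the $n+j$ are divisible by $p^a$, and at most one $j$ has larger $p$-adic valuation. So, outside at most one exceptional position per construction prime, Mertens gives $\sum_j\log s_j\le K(\log x+O(1))$, and AM--GM forces $\sum_j 1/s_j\gg K/x$. This exceeds $\log 2$ once $K$ is a large multiple of $x\asymp\log n$, and tends to infinity at your length $K\asymp x\log x\log_3x/(\log_2x)^2$. Randomising the residues cannot help, because they only redistribute the $s_j$. The real cost of position $j$ is decided by whether $n+j$ has a prime factor of size comparable to $n$, and congruence conditions modulo the construction primes do not control that.

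The missing idea is a free translation parameter over which one averages. EGIP already use it for the covered positions; the paper uses it for the survivors as well. With $M=\prod_{(\log u)^2\le p\le u}p$, one fixes $A\bmod M$ by the sieve: zero class on the \emph{large} primes $(y,u]$, greedy classes on $[(\log u)^2,y]$. Your Step~2(i) has this backwards: $n\equiv 0$ modulo the primes in $(\log x,z]$ leaves the $j$ free of such primes, not the $z$-smooth ones. One then takes $n=jM+A$ with $j\in[(1-\delta)M,M]$, so $\log n\sim 2u$.

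Apart from a negligible part $S_2$, position $t$ contributes only when $n+t=q\ell$ with $q>n/u^3$ prime and $\ell<u^3$. For covered $t$, some prime of $M$ divides $\ell$, so $\ell\ge(\log u)^2$, and Lemma~\ref{lem:egip-count} makes the total $o(M^3)$. For uncovered $t$, $(\ell,M)=1$, the possible $q$ form a progression modulo $M$ in an interval of length about $\delta M^2/\ell$, and Brun--Titchmarsh gives Lemma~\ref{lem:uncovered-s1-bound}. Averaged over $j$, a survivor costs about $\zeta(2)\log u/(u\log_2u)$ relative to $n$, not $1$. That is what allows $\#T_2\asymp u\log_2u/\log u$ survivors and fixes $c_0=(1-\epsilon)^2/(2\zeta(2))$.

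A count of bad $j$ (your Markov step, but over $j$ rather than over residues), together with $u\sim\frac12\log n$ and the halving in Proposition~\ref{prop:transfer}, then gives $3/(4\pi^2)$. Your factorisation $\frac{6}{\pi^2}\cdot\frac18$ is not derived from anything. Your direct threshold $\log 2$, in place of the route through $f(n)$, is legitimate. With this averaging in place it would even improve the constant to $3\log 2/(2\pi^2)$.
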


The core new argument in this paper was generated during a private interaction with Claude Fable~5.1, a publicly available generative-AI system. More precisely, the argument proving Proposition~\ref{prop:f-bound}, from which Theorem~\ref{thm:main} is deduced, originated in that interaction. The author has independently checked the argument, verified the mathematical results on which it relies, and takes full responsibility for the contents of this paper. The provenance and verification of the argument are discussed in Section~\ref{sec:ai-provenance}; documentation of the interaction is given in Appendix~\ref{app:record}, with the complete records of both sessions supplied as ancillary files.

The key ingredient in the proof of \eqref{eq:egip-k-bound} is a result concerning the sum
\begin{equation}
\label{eq:s-def}
S(n) := \sum_{p^{a} \, \| \, n} ap
\end{equation}
of prime factors of $n$ counted with multiplicity. Specifically, Erd\H{o}s, Graham, Ivi\'c and Pomerance \cite[Theorem 3]{EGIP1996} proved that if
\begin{equation}
\label{eq:f-def}
f(n) := \min\left\{h \ge 1 : \sum_{t = 1}^{h} S(n + t) > n\right\},
\end{equation}
then for any $\epsilon > 0$,
\begin{equation}
\label{eq:egip-f-bound}
f(n) \ge \left(\frac{1}{4} - \epsilon\right)(\log n)\frac{(\log_{2} n)(\log_{4} n)}{(\log_{3} n)^3}
\end{equation}
for infinitely many $n$. Their Corollary~2 follows from this together with an elementary upper bound for $d(n!)/d((n - 1)!)$ in terms of $S(n)$, which we record as Lemma~\ref{lem:ratio-bound} below. 

We prove Theorem~\ref{thm:main} using the same deduction, but improve the lower bound in \eqref{eq:egip-f-bound} by a factor of order $\log_{3} n$.

\begin{proposition}
\label{prop:f-bound}
Let $S(n)$ and $f(n)$ be as in \eqref{eq:s-def} and \eqref{eq:f-def}. Given any $\epsilon > 0$, there are infinitely many $n$ such that 
\begin{equation*}
f(n) \ge \left(\frac{3}{2\pi^2} - \epsilon\right)(\log n)\frac{(\log_{2} n)(\log_{4} n)}{(\log_{3} n)^2}.
\end{equation*}
\end{proposition}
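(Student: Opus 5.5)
The plan is to build $n$ by an Erd\H{o}s--Rankin type construction via the Chinese remainder theorem. The basic inequality is that if $d_t \mid n+t$, then
\begin{equation*}
S(n+t) \le S(d_t) + S\bigl((n+t)/d_t\bigr) \le S(d_t) + \frac{n+t}{d_t}.
\end{equation*}
So it suffices to find $n$ and divisors $d_t \mid n+t$ for $1 \le t \le h$ such that
\begin{equation*}
\sum_{t\le h} \frac{n+t}{d_t} + \sum_{t \le h} S(d_t) \le n.
\end{equation*}
The second sum is negligible, since each $S(d_t)$ is at most a power of $x$, where $x$ is the sieving parameter and $\log n \asymp x$. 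The problem therefore reduces to a \emph{weighted} covering problem: choose residue classes so that every $t \le h$ acquires a large divisor $d_t$, with $\sum_{t\le h} 1/d_t \le 1-\epsilon$. This is weaker than the count condition used in \cite{EGIP1996}. There one insists that every $t$ be sieved out in the Rankin sense, with $d_t$ at least a prime near $x$. Here we only need the \emph{sum} of reciprocals to be small, so the stages below may leave some $t$ poorly covered provided they are few on average.

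The construction proceeds in stages, with $h \approx c\, x (\log x)(\log_3 x)/(\log_2 x)^2$; since $\log n \sim x$, this translates into the shape of the proposition. First, take $n \equiv 0 \pmod{p^{k_p}}$ for all primes $p \le y$, with $y = x^{o(1)}$ as in Rankin and $k_p$ large. Then $d_t$ may include the full $y$-smooth part of $t$, so every $t$ with a large $y$-smooth part already has tiny weight $1/d_t$. The survivors are $t \le h$ whose $y$-smooth part is small, that is, (nearly) $y$-rough numbers times a small smooth factor. Rankin's smooth-number estimate (de Bruijn/Rankin's trick) shows these are $\ll h/(\log_2 x)^{1+\delta}$ in number. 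Second, for primes $y < p \le x/2$, choose classes $n \equiv -a_p \pmod p$ to cover as many survivors as possible. Rather than taking the greedy or Rankin choice, I would choose $a_p$ to maximize the weighted mass removed, and use an averaging argument over $a_p$: the average over residue classes of the remaining weight decreases by a factor $1-1/p$. This yields a surviving weighted mass of about $\prod_{y<p\le x/2}(1-1/p)$ times the initial mass. Third, each remaining $t$ is assigned its own prime $q \in (x/2, x]$, so that $d_t \ge x/2$. Its contribution $1/d_t \le 2/x$ is affordable as long as the number of survivors is at most $(1/2-\epsilon)x$. This is exactly where we gain over \cite{EGIP1996}: they needed the survivors to be fewer than the number of primes in $(x/2,x]$, i.e.\ $x/(2\log x)$, whereas we only need them to number $O(x)$. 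That relaxation saves a factor of $\log x$ in the sieve, which feeds through Rankin's optimization as the extra factor $\log_3 n$. Finally, the CRT gives $n$ of size $\exp((1+o(1))x)$, since the moduli have product $\exp(O(x))$ after the choice of the $k_p$. We take $n$ in the top range so that the contribution $\sum (n+t)/d_t$ is dominated by $n(1-\epsilon)$.

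The main obstacle is the second stage, together with getting the constant: we must carry the weighted averaging through the Rankin/Erd\H{o}s smooth-number counting while keeping track of the actual constant. The factor $6/\pi^2$ presumably arises from the density of squarefree parts in the first-stage survivors, and the factor $1/4$ from the loss in $(x/2,x]$ together with $\log n \sim x$. Concretely, I would first bound
\begin{equation*}
\sum_{\substack{t \le h \\ t \text{ survives stage 1}}} \frac{1}{d_t}
\end{equation*}
by $h$ times a Mertens product, and then optimize $y$ and $h$ as in Rankin's argument. I expect the delicate points to be ensuring that the averaging in stage two interacts correctly with the weights coming from stage one, and verifying that $S(d_t)$ and the $t$-terms are genuinely negligible.
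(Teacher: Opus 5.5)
Your reduction to $\sum_{t\le h}1/d_t\le 1-\epsilon$ is where the argument breaks, and the loss is not just in the constant: at your length that inequality cannot be satisfied by any construction. Let $Q$ be your CRT modulus, so every prime factor of $Q$ is at most $x$ and $\log Q\ll x$. Any divisor $d_t$ you can certify from the construction (with $S(d_t)$ small) divides $\gcd(n+t,Q)$. Write $v_p$ for the $p$-adic valuation. For each $p^k\,\|\,Q$ we have $\sum_{t\le h}\min(v_p(n+t),k)=\sum_{i\le k}\#\{t\le h: p^i\mid n+t\}\le h/(p-1)+k$, hence
\[
\sum_{t\le h}\log d_t\le h\sum_{p\le x}\frac{\log p}{p-1}+\log Q=h(\log x+O(1))+\log Q .
\]
By the AM--GM inequality,
\[
\sum_{t\le h}\frac{1}{d_t}\ge h\exp\Bigl(-\frac{1}{h}\sum_{t\le h}\log d_t\Bigr)\gg\frac{h}{x}\,e^{-\log Q/h}.
\]
At your length $h\asymp x\log x\log_3 x/(\log_2 x)^2$ the right-hand side tends to infinity, whatever residue classes and prime powers you choose. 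So this framework cannot get past $h\ll x\asymp\log n$. The same obstruction shows up in your third stage. There are only about $x/(2\log x)$ primes in $(x/2,x]$, and each covers essentially one survivor, so you cannot give each of $O(x)$ survivors its own prime. The claimed saving of a factor $\log x$ is therefore not available.

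The missing idea is that $S(m)$ is essentially the largest prime factor of $m$. The bound $S\bigl((n+t)/d_t\bigr)\le (n+t)/d_t$ is attained only when the cofactor is prime, and the gain comes from showing this is rare, which requires averaging over a free parameter.

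The paper sets $n_{j,t}=jM+A+t$ with $j\in[(1-\delta)M,M]$, so $n\asymp M^2$ and $\log n\sim 2u$. It discards prime factors $\le n/u^3$ trivially and writes the remaining contribution as $q=n_{j,t}/\ell$ with $\ell<u^3$. For fixed $t$ and $\ell$, the values of $j$ making $(jM+A+t)/\ell$ prime are counted by Brun--Titchmarsh in a progression modulo $M$. For covered $t$, some prime $p\ge(\log u)^2$ must divide $\ell$, and these $t$ contribute $o(M^3)$ in total, as in Erd\H{o}s--Graham--Ivi\'c--Pomerance.

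The new step is to leave the survivors $T_2$ of the Rankin sieve \emph{completely uncovered}. That sieve uses classes $0\bmod p$ for $y<p\le u$, then greedy choices for $(\log u)^2\le p\le y$. Uncovered means $d_t=1$, which your inequality would charge $n+t>n$. Since then $(\ell,M)=1$, the first moment gives $\sum_{j}S_1(n_{j,t})\le(\zeta(2)+o(1))\,\delta M^3\log u/(u\log_2 u)$. That is an average over $j$ of about $2\zeta(2)\frac{M}{\phi(M)}\cdot\frac{M^2}{u}$ for each uncovered position. With $\#T_2\le(\frac{2c_0}{1-\epsilon}+o(1))\,u\log_2 u/\log u$, the total is $(1-\epsilon+o(1))\delta M^3$, so some $j$ works.

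This also accounts for the constant: $3/(2\pi^2)=\frac{1}{2}\cdot\frac{1}{2\zeta(2)}$. Here $\zeta(2)=\sum\ell^{-2}$ comes from the cofactors $\ell$, one factor $\frac12$ from the constant $2$ in Brun--Titchmarsh, and the other from $\log n\sim 2u$. It does not come from squarefree densities or from the interval $(x/2,x]$.
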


After fixing notation and recalling relevant background, we deduce Theorem~\ref{thm:main} from Proposition~\ref{prop:f-bound} in Section~\ref{sec:deduction}, and prove the proposition in Section~\ref{sec:proof-f-bound}. Section~\ref{sec:ai-provenance} and Appendix~\ref{app:record} document the provenance of the argument.

\section{Notation}
\label{sec:notation}

Throughout, $p$ and $q$ denote primes. For $a \ge 1$, the notation $p^a \, \| \, n$ means that $p^a$ exactly divides $n$, that is, $p^a \mid n$ but $p^{a + 1} \nmid n$. For integers $a$ and $b$, $(a,b)$ denotes their greatest common divisor. We write $\pi(x)$ for the number of primes not exceeding $x$, $\phi$ for Euler's totient function, and $\zeta$ for the Riemann zeta function. The notation $\#T$ denotes the cardinality of a finite set $T$.

The divisor function $d(n)$ is defined in Section~\ref{sec:intro}, as are $K(n)$, $S(n)$ and $f(n)$; see \eqref{eq:k-def}, \eqref{eq:s-def} and \eqref{eq:f-def}. Given a sufficiently large real number $x$, the $j$\,th iterated logarithm $\log_j x$ is defined by
\begin{equation*}
\log_1 x := \log x, \qquad \log_{j} x := \log(\log_{j - 1} x) \quad (j \ge 2),
\end{equation*}
where $\log$ denotes the natural logarithm.

For functions $g$ and $G$ of a parameter tending to infinity, the notation $g = o(G)$ means that 
\begin{equation*}
\frac{g}{G} \to 0
\end{equation*}
as that parameter tends to infinity. In particular, $g = o(1)$ means that $g \to 0$. We write $g \sim G$ if $g/G \to 1$, equivalently if $g = (1 + o(1))G$.

The notation $F = O(G)$, equivalently $F \ll G$ or $G \gg F$, means that
\begin{equation*}
|F|\le C|G|
\end{equation*}
for some positive constant $C$ and all sufficiently large values of the relevant parameter. Unless stated otherwise, the implied constants are absolute. Finally, $F \asymp G$ means that $F \ll G$ and $G \ll F$. Other notation is introduced \emph{in situ}.

\section{Related results and prime gaps}
\label{sec:background}

The local behavior of $d(n)$ appearing in \eqref{eq:d-ratio-limits} has been studied much more precisely. Erd\H{o}s \cite{ERD1986} conjectured that the sequence $d(n + 1)/d(n)$ is dense in $(0,\infty)$. Partial results concerning its set of limit points were obtained by Hildebrand \cite{HIL1987}. Using sieve results of Goldston, Graham, Pintz and Y{\i}ld{\i}r{\i}m \cite{GGPY2011}, Hasanalizade \cite{HAS2021} sharpened Hildebrand's result, and further progress was made by Schlage-Puchta \cite{SCH2025}. Building on Hasanalizade's approach and again exploiting the Goldston--Graham--Pintz--Y{\i}ld{\i}r{\i}m sieve, Eberhard \cite{EBE2026} proved the stronger statement that $d(n + 1)/d(n)$ assumes every positive rational value infinitely often.

The problem considered here is also closely related to that of large gaps between consecutive primes. Let $p_m$ denote the $m$\,th smallest prime, so that 
\begin{equation*}
G(x) := \max_{p_{m + 1} \le x}(p_{m + 1} - p_m)
\end{equation*}
is the largest gap between consecutive primes up to $x$. Erd\H{o}s \cite{ERD1935} proved the lower bound
\begin{equation*}
G(x) \gg \frac{(\log x)(\log_2 x)}{(\log_3 x)^2},
\end{equation*}
and Rankin \cite{RAN1938} refined his construction to obtain
\begin{equation}
\label{eq:rankin-gap}
G(x) \gg \frac{(\log x)(\log_2 x)(\log_4 x)}{(\log_3 x)^2}.
\end{equation}
The method underlying these results, now generally known as the Erd\H{o}s--Rankin construction, produces a long interval of composite integers by choosing residue classes modulo suitably selected primes.

For many decades, subsequent improvements to \eqref{eq:rankin-gap} affected only the implied constant. This barrier was broken independently by Maynard \cite{MAY2016} and by Ford, Green, Konyagin and Tao \cite{FGKT2016}. Maynard showed that the implicit constant in Rankin's bound may be taken arbitrarily large, while Ford, Green, Konyagin and Tao showed that Rankin's lower bound may be multiplied by a factor tending to infinity. Their subsequent joint work with Maynard \cite{FGKMT2018} gave the quantitative improvement
\begin{equation*}
G(x) \gg \frac{(\log x)(\log_2 x)(\log_4 x)} {\log_3 x}.
\end{equation*}

Very recently, two further improvements to \eqref{eq:rankin-gap} have been announced. Neither has been refereed, and we record both as claims. A manuscript attributed to GPT~5.6~Sol \cite{GPT2026} asserts
\begin{equation*}
G(x) \gg \frac{(\log x)(\log_2 x)}{\log_4 x},
\end{equation*}
and a preprint released by OpenAI \cite{OAI2026}, which attributes its proof to GPT~6~Astra, asserts the stronger bound
\begin{equation*}
G(x) \gg \frac{(\log x)(\log_2 x)^2(\log_4 x)}{(\log_3 x)^2}.
\end{equation*}
A Lean~4 formalization of the main theorem of each is available, and both developments are discussed on the Erd\H{o}s Problems forum, where T.~F.~Bloom also gives an expository sketch of the ideas underlying the first \cite{EP4DISC}. We use neither bound in the present paper, for reasons discussed at the end of this section.

The connection between large prime gaps and $K(n)$ is immediate but only one-sided. As observed in Section~\ref{sec:intro}, if $p > n$ is the least prime exceeding $n$, then
\begin{equation*}
K(n) \le p - n.
\end{equation*}
Thus a large value of $K(n)$ requires a long prime-free interval. A long prime gap alone, however, does not suffice: even in the absence of primes, the successive contributions from composite integers may accumulate so that $d((n + k)!)/d(n!)$ reaches $2$ before the next prime is encountered. 

For this reason, the lower bound of Erd\H{o}s, Graham, Ivi\'c and Pomerance in \eqref{eq:egip-k-bound} is not obtained merely by applying an Erd\H{o}s--Rankin large-gap result. Rather, their proof adapts the underlying Erd\H{o}s--Rankin construction to a stronger, weighted problem. In the notation of \eqref{eq:s-def} and \eqref{eq:f-def}, it is not enough to arrange that the integers in a long interval are composite; one must also control the cumulative quantity
\begin{equation*}
\sum_{t = 1}^{H} S(n + t).
\end{equation*}
Their Theorem~3, recalled in \eqref{eq:egip-f-bound}, achieves precisely such control and, together with \eqref{eq:egip-ratio-bound}, yields their lower bound for $K(n)$. Thus the problem of obtaining large values of $K(n)$ may be viewed as a weighted analog of the large-prime-gap problem.

The improvements to \eqref{eq:rankin-gap} of Maynard \cite{MAY2016}, Ford--Green--Konyagin--Tao \cite{FGKT2016}, and Ford--Green--Konyagin--Maynard--Tao \cite{FGKMT2018} improve the treatment of the final covering stage of the Erd\H{o}s--Rankin construction, in which each position surviving the earlier sieving must be covered, and \cite{OAI2026} likewise replaces that stage by a stronger covering result. In the weighted problem considered here that stage is not needed at all: the argument of Section~\ref{sec:proof-f-bound} leaves the surviving positions uncovered and controls their contribution on average over a further parameter. What limits the attainable interval length is instead the number of survivors together with the average cost of one, and the greedy sieve already supplies the required order of magnitude at the scale used in our proof. Improvements to the final covering therefore do not by themselves transfer. The tilted construction of \cite{GPT2026} modifies an earlier sieving stage rather than the final one, and is of a different character; we do not investigate here whether it can be adapted to the weighted problem.

Our Proposition~\ref{prop:f-bound} improves this weighted Erd\H{o}s--Rankin estimate. In particular, whereas the bound of Erd\H{o}s, Graham, Ivi\'c and Pomerance has size
\begin{equation*}
\frac{(\log n)(\log_2 n)(\log_4 n)}{(\log_3 n)^3},
\end{equation*}
our result restores the classical Rankin scale
\begin{equation*}
\frac{(\log n)(\log_2 n)(\log_4 n)}{(\log_3 n)^2},
\end{equation*}
up to the explicit constant appearing in Proposition~\ref{prop:f-bound}.

\section{Deduction of the main theorem}
\label{sec:deduction}

Erd\H{o}s, Graham, Ivi\'c and Pomerance \cite[Corollary~2]{EGIP1996} deduce their result in five lines. We give their argument with some details made explicit, and formulate it as a general transfer principle: a lower bound for $f(n)$ on any scale that is slowly varying at its own order yields the corresponding lower bound for $K(n)$, with the constant halved. Any future improvement of Proposition~\ref{prop:f-bound} on such a scale therefore yields a corresponding improvement of Theorem~\ref{thm:main} without further argument.

The deduction rests on an elementary inequality relating $d(n!)/d((n - 1)!)$ to $S(n)$, established in \cite[(5)]{EGIP1996}. We reproduce it with proof for completeness.

\begin{lemma}
\label{lem:ratio-bound}
For all $n \ge 1$,
\begin{equation}
\label{eq:egip-ratio-bound}
\frac{d(n!)}{d((n - 1)!)} \le \exp\left(\frac{S(n)}{n}\right).
\end{equation}
\end{lemma}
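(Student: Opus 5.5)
We write $d$ of a factorial via Legendre's formula. If $v_p(m)$ denotes the exponent of $p$ in $m$, then
\begin{equation*}
d(m!) = \prod_{p \le m} \bigl(1 + v_p(m!)\bigr).
\end{equation*}
Passing from $(n-1)!$ to $n!$ raises the exponent of each prime $p^a \,\|\, n$ by exactly $a$, and leaves every other exponent unchanged. Writing $e_p := v_p((n-1)!)$, we therefore have
\begin{equation*}
\frac{d(n!)}{d((n-1)!)} = \prod_{p^a \,\|\, n} \frac{1 + e_p + a}{1 + e_p} = \prod_{p^a \,\|\, n}\left(1 + \frac{a}{1 + e_p}\right).
\end{equation*}
For $n = 1$ both sides of \eqref{eq:egip-ratio-bound} equal $1$, so assume $n \ge 2$. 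Since $1 + x \le e^x$, it suffices to show, for each prime power $p^a \,\|\, n$, that
\begin{equation*}
\frac{a}{1 + e_p} \le \frac{ap}{n},
\end{equation*}
that is, $1 + e_p \ge n/p$. Summing the resulting exponents over $p^a \,\|\, n$ then gives $S(n)/n$.

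The key step is the lower bound $e_p \ge n/p - 1$. The multiples of $p$ in $[1, n-1]$ number $\lfloor (n-1)/p \rfloor$, so $e_p \ge \lfloor (n-1)/p \rfloor$. Because $p \mid n$, we have $n/p$ an integer, and $\lfloor (n-1)/p \rfloor = n/p - 1$ exactly. Hence $1 + e_p \ge n/p$, as required.

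There is one edge case: $p = n$, i.e.\ $n$ prime. Then $e_p = 0$, and the needed inequality $1 \ge n/p = 1$ holds with equality. The ratio is then $2 \le e^{1} = \exp(S(n)/n)$, which is consistent.

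No step is a genuine obstacle. The only point requiring care is using divisibility $p \mid n$ to get the exact floor value $\lfloor (n-1)/p\rfloor = n/p - 1$ rather than a lossy bound. This is already enough, and higher prime powers in Legendre's formula are simply discarded.
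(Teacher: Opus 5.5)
Your proof is correct and follows the paper's argument: the same factorization of $d(n!)/d((n-1)!)$ over $p^a \,\|\, n$, the inequality $1+x \le e^x$, and the bound $e_p \ge \lfloor (n-1)/p \rfloor$. The only difference is cosmetic: you use $p \mid n$ to get $\lfloor (n-1)/p \rfloor = n/p - 1$ exactly, whereas the paper uses the general bound $\lfloor (n-1)/p \rfloor \ge (n-p)/p$, and both give $p(1+e_p) \ge n$.
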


\begin{proof}
For a prime $p$ write $\nu_p(m)$ for the exponent of $p$ in $m!$, so that
\begin{equation*}
d(m!) = \prod_{p} (\nu_p(m) + 1).
\end{equation*}
Since $n! = n(n - 1)!$, we have
\begin{equation*}
\nu_p(n) = \nu_p(n - 1) + a
\end{equation*}
if $p^a \,\|\, n$, while $\nu_p(n) = \nu_p(n - 1)$ if $p \nmid n$. Hence all factors corresponding to primes not dividing $n$ cancel in the quotient, and
\begin{equation*}
\frac{d(n!)}{d((n - 1)!)}
= \prod_{p^a \, \| \, n} \left(1 + \frac{a}{\nu_p(n - 1) + 1}\right)
\le \exp\left(\sum_{p^a \, \| \, n} \frac{a}{\nu_p(n - 1) + 1} \right).
\end{equation*}
Since
\begin{equation*}
\nu_p(n - 1) \ge \left\lfloor \frac{n - 1}{p} \right\rfloor \ge \frac{n - 1 - (p - 1)}{p},
\end{equation*}
we have $p(\nu_p(n - 1) + 1) \ge n$, whence
\begin{equation*}
\frac{a}{\nu_p(n - 1) + 1} \le \frac{ap}{n}.
\end{equation*}
Summing over $p^a \, \| \, n$ gives the lemma.
\end{proof}

We now formulate the deduction in a form that does not depend on the particular lower bound supplied by Proposition~\ref{prop:f-bound}.

\newpage
\begin{proposition}
\label{prop:transfer}
Let $\Lambda$ be a positive real-valued function defined for all sufficiently large real $x$, and suppose that
\begin{equation}
\label{eq:transfer-lambda-growth}
\Lambda(x) \to \infty \qquad (x \to \infty),
\end{equation}
and that, for each fixed $C > 0$,
\begin{equation}
\label{eq:transfer-lambda-stability}
\Lambda(x + h) \sim \Lambda(x) \qquad (x \to \infty)
\end{equation}
uniformly for $0 \le h \le C\Lambda(x)$. Let $c_1 > 0$ and suppose that
\begin{equation}
\label{eq:transfer-f-hypothesis}
f(n) \ge c_1 \Lambda(n)
\end{equation}
for infinitely many $n$. Then, for every $\epsilon > 0$,
\begin{equation}
\label{eq:transfer-k-conclusion}
K(n) > \left( \frac{c_1}{2} - \epsilon \right) \Lambda(n)
\end{equation}
for infinitely many $n$.
\end{proposition}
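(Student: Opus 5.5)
The plan is to convert a single long run with small cumulative $S$-sum into a run of about half the length on which the total increase of $\log d(\cdot\,!)$ stays below $\log 2$. Lemma~\ref{lem:ratio-bound} and a pigeonhole over two halves of the run do this. The factor $\tfrac12$ is affordable because $\exp(1/2) < 2$.

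\emph{Step 1 (a controlled interval).} Fix $\epsilon > 0$ and take $n$ large with $f(n) \ge c_1\Lambda(n)$. Put
\begin{equation*}
H := \lceil c_1 \Lambda(n) \rceil - 1 .
\end{equation*}
Then $H < c_1\Lambda(n) \le f(n)$, so by the minimality in \eqref{eq:f-def},
\begin{equation*}
\sum_{t=1}^{H} S(n+t) \le n .
\end{equation*}
By \eqref{eq:transfer-lambda-growth}, $H \to \infty$. I deliberately truncate at a length of order $\Lambda(n)$ rather than using $f(n)$ itself. The reason is that $f(n)$ could be far larger than $\Lambda(n)$, and the stability hypothesis \eqref{eq:transfer-lambda-stability} only controls shifts of size $O(\Lambda(n))$.

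\emph{Step 2 (pigeonhole).} Let $H_1 := \lfloor H/2 \rfloor$. Split $[n+1, n+H]$ into the blocks $[n+1, n+H_1]$ and $[n+H_1+1, n+H]$. Each block has length at least $H_1$, and the two $S$-sums add up to at most $n$, so one block has $S$-sum at most $n/2$. Let $m$ be $n$ or $n + H_1$ accordingly, so that $m \ge n$ and
\begin{equation*}
\sum_{t=1}^{H_1} S(m+t) \le \frac{n}{2}.
\end{equation*}

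\emph{Step 3 (applying the lemma).} Take $1 \le K \le H_1$. Telescoping Lemma~\ref{lem:ratio-bound} over $m+1, \dots, m+K$ and using $m + t > n$ gives
\begin{equation*}
\frac{d((m+K)!)}{d(m!)} \le \exp\Bigl(\sum_{t=1}^{K} \frac{S(m+t)}{m+t}\Bigr) \le \exp\Bigl(\frac{1}{n}\sum_{t=1}^{H_1} S(m+t)\Bigr) \le e^{1/2} < 2 .
\end{equation*}
Hence
\begin{equation*}
K(m) > H_1 \ge \frac{c_1}{2}\Lambda(n) - 2 .
\end{equation*}

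\emph{Step 4 (changing base point).} Since $0 \le m - n \le H \le c_1 \Lambda(n)$, hypothesis \eqref{eq:transfer-lambda-stability} with $C = c_1$ gives $\Lambda(m) \sim \Lambda(n)$. Together with $\Lambda(n) \to \infty$, this yields $K(m) > (c_1/2 - \epsilon)\Lambda(m)$ once $n$ is large enough. Since $m \ge n$ and there are infinitely many admissible $n$, we obtain infinitely many such $m$.

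The only delicate point is Step 1. One must take the interval length of order $\Lambda(n)$, not $f(n)$, so that stability applies, and one must keep the strict inequality $e^{1/2} < 2$. That inequality is exactly what forces the constant $c_1/2$: splitting into two halves works, whereas using the whole interval would only give the bound $e$.
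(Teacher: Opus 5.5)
Your proposal is correct and follows essentially the paper's argument: truncate below $f$ at a length of order $\Lambda$, pigeonhole over two halves to get a block with $S$-sum at most $n/2$, telescope Lemma~\ref{lem:ratio-bound} to get the bound $e^{1/2}<2$, and use \eqref{eq:transfer-lambda-stability} to move the base point. The only difference is bookkeeping: the paper fixes $H=\lfloor (c_1/2-\epsilon/2)\Lambda(N)\rfloor$ in advance and splits a run of length $2H<f(N)$ into two equal blocks, whereas you take the maximal length $\lceil c_1\Lambda(n)\rceil-1$ and absorb the rounding loss at the end.
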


\begin{proof}
Since $K(n) \ge 1$, the conclusion is trivial unless $\epsilon < c_{1}/2$, and we may therefore assume this. Set
\begin{equation*}
c = \frac{c_{1}}{2} - \frac{\epsilon}{2} > 0.
\end{equation*}

Let $N$ be one of the infinitely many integers for which \eqref{eq:transfer-f-hypothesis} holds, and put
\begin{equation*}
H = \left\lfloor c \Lambda(N) \right\rfloor.
\end{equation*}
Since $\Lambda(N) \to \infty$, we have $H \ge 1$ for all sufficiently large such $N$. Moreover
\begin{equation*}
2H \le 2c\Lambda(N) = (c_1 - \epsilon)\Lambda(N) < c_1 \Lambda(N) \le f(N),
\end{equation*}
so that $2H < f(N)$. By the definition \eqref{eq:f-def} of $f(N)$, every $h < f(N)$ satisfies $\sum_{t = 1}^{h} S(N + t) \le N$; taking $h = 2H$,
\begin{equation*}
\sum_{t = 1}^{2H} S(N + t) \le N.
\end{equation*}
Splitting this sum into two blocks of length $H$, at least one of
\begin{equation*}
\sum_{t = 1}^{H} S(N + t), \qquad \sum_{t = H + 1}^{2H} S(N + t)
\end{equation*}
is at most $N/2$.

In the first case put $n = N$, and in the second case put $n = N + H$; in either case put $K = H$. Since $n \ge N$, we have in either case
\begin{equation}
\label{eq:transfer-small-s-block}
\sum_{t = 1}^{K} S(n + t) \le \frac{N}{2} \le \frac{n}{2}.
\end{equation}

We next compare $\Lambda(n)$ with $\Lambda(N)$. In either case $n = N + h$ with $0 \le h \le H \le c\Lambda(N)$, so \eqref{eq:transfer-lambda-stability}, applied with $C = c$, gives
\begin{equation*}
\Lambda(n) \sim \Lambda(N) \qquad (N \to \infty).
\end{equation*}
Since $\Lambda(N) \to \infty$, we also have $H \sim c\Lambda(N)$, and therefore
\begin{equation*}
\frac{K}{\Lambda(n)} = \frac{H}{\Lambda(n)} \to c = \frac{c_1}{2} - \frac{\epsilon}{2}
\end{equation*}
as $N \to \infty$ along these integers. In particular, for all sufficiently large such $N$,
\begin{equation}
\label{eq:transfer-k-lower-bound}
K > \left( \frac{c_1}{2} - \epsilon \right) \Lambda(n).
\end{equation}

Finally, by Lemma~\ref{lem:ratio-bound} and
\eqref{eq:transfer-small-s-block},
\begin{align*}
\frac{d((n + K)!)}{d(n!)}
& =
\prod_{t = 1}^{K} \frac{d((n + t)!)}{d((n + t - 1)!)}
\\
& \le
\exp\left( \sum_{t = 1}^{K} \frac{S(n + t)}{n + t} \right)
\\
& \le
\exp\left( \frac{1}{n} \sum_{t = 1}^{K} S(n + t) \right)
\\
& \le
\exp\left( \frac{1}{2} \right)
\\
& 
< 2.
\end{align*}
Hence $K(n) > K$, and \eqref{eq:transfer-k-lower-bound} gives \eqref{eq:transfer-k-conclusion}. Since the admissible $N$ are unbounded and the corresponding integers $n$ satisfy $n \ge N$, this holds for infinitely many $n$.
\end{proof}

\begin{remark}
\label{rem:halving}
The factor $1/2$ in Proposition~\ref{prop:transfer} is intrinsic to this use of $f(n)$ as a black box. Indeed, if
\begin{equation*}
m = f(N) - 1,
\end{equation*}
then the definition of $f$ gives only
\begin{equation*}
\sum_{t = 1}^{m} S(N + t) \le N.
\end{equation*}
Splitting these $m$ terms into two consecutive blocks shows that one block of length essentially $m/2$ has sum at most $N/2$. On the other hand, no longer block can be guaranteed from the displayed inequality alone. If $K > m/2$, then all intervals of $K$ consecutive indices in $\{1,\ldots,m\}$ have a common index. An arbitrary nonnegative sequence with total mass $N$ concentrated at that index satisfies the displayed inequality, while every such block has sum $N$, so no bound of the form $\theta N$ with $\theta < 1$ can be guaranteed. Thus any improvement of the factor $1/2$ would require additional information about the values $S(N + t)$, or a modification of the definition \eqref{eq:f-def} of $f$.
\end{remark}

\begin{proof}[Proof of Theorem~\ref{thm:main}]
Write
\begin{equation*}
\mathcal{L}(x) = (\log x) \frac{(\log_{2} x)(\log_{4} x)}{(\log_{3} x)^{2}},
\end{equation*}
defined for all sufficiently large $x$. Plainly $\mathcal{L}(x) \to \infty$ as $x \to \infty$, which verifies \eqref{eq:transfer-lambda-growth}. Moreover, $\mathcal{L}(x) = o(x)$. To verify \eqref{eq:transfer-lambda-stability}, fix $C > 0$ and let $0 \le h \le C\mathcal{L}(x)$. Since $\mathcal{L}(x) = o(x)$, we have $h = o(x)$, so $x + h = (1 + o(1))x$ and hence
\begin{equation*}
\log(x + h) = \log x + o(1) \sim \log x,
\end{equation*}
uniformly in $h$. Iterating, $\log_{j}(x + h) \sim \log_{j} x$ for each fixed $j \ge 1$, uniformly in $h$, and therefore
\begin{equation*}
\mathcal{L}(x + h) \sim \mathcal{L}(x) \qquad (x \to \infty),
\end{equation*}
uniformly for $0 \le h \le C\mathcal{L}(x)$.

Since $K(n) \ge 1$, the assertion of Theorem~\ref{thm:main} is trivial unless $\epsilon < 3/(4\pi^{2})$, and we may therefore assume this. By Proposition~\ref{prop:f-bound}, there are infinitely many $n$ with
\begin{equation*}
f(n) \ge \left( \frac{3}{2\pi^{2}} - \epsilon \right)\mathcal{L}(n).
\end{equation*}
Apply Proposition~\ref{prop:transfer} with $\Lambda = \mathcal{L}$,
\begin{equation*}
c_{1} = \frac{3}{2\pi^{2}} - \epsilon > 0,
\end{equation*}
and $\epsilon/2$ in place of $\epsilon$. There are then infinitely many $n$ with
\begin{equation*}
K(n)
>
\left( \frac{c_{1}}{2} - \frac{\epsilon}{2} \right)\mathcal{L}(n)
=
\left( \frac{3}{4\pi^{2}} - \epsilon \right)\mathcal{L}(n),
\end{equation*}
which is the assertion of Theorem~\ref{thm:main}.
\end{proof}

\section{Proof of Proposition~\ref{prop:f-bound}}
\label{sec:proof-f-bound}

Since the result is trivial for $\epsilon \ge 3/(2\pi^2)$, it suffices to consider
\begin{equation*}
0 < \epsilon < \frac{3}{2\pi^2}.
\end{equation*}
Let $u$ be a real parameter tending to infinity. Throughout this proof, all asymptotic notation refers to $u \to \infty$, with $\epsilon$ (and hence $\delta$) fixed. Put 
\begin{equation*}
M := \prod_{(\log u)^2 \, \le p \, \le u} p.
\end{equation*}
By the prime number theorem and Mertens' theorem for products,
\begin{equation}
\label{eq:m-estimates}
\log M = (1 + o(1)) u \qquad \text{and} \qquad \frac{M}{\phi(M)} = \left(\frac{1}{2} + o(1)\right)\frac{\log u}{\log_{2} u}.
\end{equation}
Also define
\begin{align}
y & := u^{ (1 - \epsilon)\log_{3} u / \log_{2} u }, \label{eq:y-def} \\
c_0 & := \frac{3}{\pi^2}(1 - \epsilon)^2 = \frac{(1 - \epsilon)^2}{2\zeta(2)}, \label{eq:c0-def} \\
L & := \left\lfloor \frac{c_0 u \log u \log_{3} u}{(\log_{2} u)^2} \right\rfloor, \label{eq:l-def} \\
\delta & := \frac{\epsilon}{2}, \label{eq:delta-def} \\
J & := [(1 - \delta)M, M] \cap \mathbb{Z}. \label{eq:j-def}
\end{align}

We use the usual terminology of the Erd\H{o}s--Rankin construction. For a prime $p$, we say that the residue class $a_p \bmod p$ \emph{covers} an integer $t$ if
\begin{equation*}
t \equiv a_p \bmod p.
\end{equation*}
An integer is \emph{covered} by a collection of residue classes if it is covered by at least one of them, and is \emph{uncovered} otherwise. After the residue classes have been chosen, the Chinese remainder theorem will be used to choose $A$ such that
\begin{equation*}
A \equiv -a_p \bmod p.
\end{equation*}
Thus every covered $t$ satisfies $p \mid A + t$ for at least one of the corresponding primes $p$.

\proofstep{Step 1 (initial sieving)} For $y < p \le u$, put $a_p = 0$, and define the residual set
\begin{equation*}
T_1 := [1,L] \setminus \bigcup_{y \, < \, p \, \le \, u} \{t : t \equiv 0 \bmod p\}.
\end{equation*}
Thus $T_1$ consists precisely of those $t \in [1,L]$ having no prime factor in $(y,u]$. Since $L < u^2$ for all sufficiently large $u$, no integer $t \le L$ can have two prime factors exceeding $u$. Hence every $t \in T_1$ is either $y$-smooth or is of the form
\begin{equation*}
t = mp,
\end{equation*}
where $p > u$ is prime. In the latter case, since $mp \le L$,
\begin{equation*}
m \le \frac{L}{p} < \frac{L}{u}.
\end{equation*}
Therefore the number of integers of this form is at most
\begin{align}
\label{eq:large-prime-factor-bound}
\begin{split}
\sum_{m \, \le \, L/u} \pi\left(\frac{L}{m}\right) 
& = 
\sum_{m \, \le \, L/u} (1 + o(1))\frac{L/m}{\log(L/m)} \\
& \le
(1 + o(1)) \frac{L}{\log u} \sum_{m \, \le \, L/u}\frac{1}{m} \\
& \le
(1 + o(1)) \frac{L}{\log u} \left(1 + \log(L/u)\right) \\
& =
(1 + o(1))\frac{L\log_{2} u}{\log u}.
\end{split}
\end{align}
Here the prime number theorem is used uniformly for $m \le L/u$, since $L/m \ge u$, and we have also used
\begin{equation*}
\sum_{m \, \le \, x} \frac{1}{m} \le 1 + \log x
\end{equation*}
together with
\begin{equation*}
\log(L/u) = (1 + o(1))\log_2 u,
\end{equation*}
which follows from \eqref{eq:l-def}.

To estimate the number of $y$-smooth integers in $T_1$, write
\begin{equation*}
\Psi(L, y) := \#\{t \le L : p \mid t \implies p \le y\},
\end{equation*}
and set
\begin{equation*}
v := \frac{\log L}{\log y} = (1 + o(1)) \frac{\log_{2} u}{(1 - \epsilon)\log_{3} u},
\end{equation*}
where the last estimate follows from \eqref{eq:y-def} and \eqref{eq:l-def}.

We first record the size of $\rho(v)$, where $\rho$ denotes the Dickman--de Bruijn function. Since $v \to \infty$, by \cite[(1.8)]{DEB1951},
\begin{equation*}
\rho(v) = \exp\left(-v\log v - v\log_{2} v + O(v)\right).
\end{equation*}
Only the first term in the exponent contributes at the scale relevant here. Now
\begin{equation*}
\log v = \log_{3} u - \log_{4} u + O(1)
\end{equation*}
and
\begin{equation*}
\log_{2} v = \log_{4} u + o(1),
\end{equation*}
so
\begin{equation*}
v\log v = \left(\frac{1}{1 - \epsilon} + o(1)\right)\log_{2} u,
\end{equation*}
while
\begin{equation*}
v\log_{2} v + O(v) = o(\log_{2} u).
\end{equation*}
Consequently,
\begin{equation}
\label{eq:rho-power-log}
\rho(v) = (\log u)^{-1/(1 - \epsilon) + o(1)}.
\end{equation}

We now invoke de Bruijn's estimates \cite[(1.3), (1.4)]{DEB1951}, which together give
\begin{equation*}
\Psi(L, y) = L\rho(v)\left\{1 + O\left(\frac{\log(2 + v)}{\log y}\right)\right\} + O\left(Lv^{2}R(y)\right),
\end{equation*}
uniformly for $1 \le v < \log y$, the additive constant appearing in \cite[(1.4)]{DEB1951} being omissible in this range by the footnote there. Here $R$ may be taken to be $R(y) = C\exp\{-c(\log y)^{1/2}\}$ for suitable positive constants $c$ and $C$; see \cite[(3.7), (3.8)]{DEB1951}. By \eqref{eq:l-def} we have $\log L = (1 + o(1))\log u$, so \eqref{eq:y-def} gives
\begin{equation*}
\log y = (\log u)^{1 + o(1)},
\end{equation*}
whereas $v = (\log_{2} u)^{1 + o(1)}$. In particular $v < \log y$ for all sufficiently large $u$, and the relative error above is $O(\log_{3} u / (\log u)^{1 + o(1)}) = o(1)$. Moreover
\begin{equation*}
v^{2}R(y) = \exp\left(-(\log u)^{1/2 + o(1)}\right),
\end{equation*}
which is smaller than any fixed power of $1/\log u$, and hence is $o(\rho(v))$ by \eqref{eq:rho-power-log}. Therefore
\begin{equation*}
\Psi(L, y) = (1 + o(1))L\rho(v),
\end{equation*}
and only the resulting upper bound will be used. Since $\epsilon > 0$ is fixed and
\begin{equation*}
\frac{1}{1 - \epsilon} > 1 + \frac{\epsilon}{2},
\end{equation*}
it follows from \eqref{eq:rho-power-log} that
\begin{equation}
\label{eq:psi-bound}
\Psi(L,y) \ll \frac{L}{(\log u)^{1 + \epsilon/2}}.
\end{equation}

Combining \eqref{eq:large-prime-factor-bound} with \eqref{eq:psi-bound}, we obtain
\begin{equation}
\label{eq:t1-bound}
\#T_1 \le (1 + o(1))\frac{L\log_{2} u}{\log u}.
\end{equation}

\proofstep{Step 2 (greedy covering)} For any finite set of integers $T$ and any prime $p$, 
\begin{equation*}
 \# T = \sum_{a \bmod p} \# \{t \in T : t \equiv a \bmod p\},
\end{equation*}
so there exists an integer $a_p$ such that 
\begin{equation*}
 \#\{t \in T : t \equiv a_p \bmod p\} \ge \frac{\# T}{p}.
\end{equation*} 
Proceeding through the primes $p \in [(\log u)^2,y]$ one at a time, choose $a_p$ so that its residue class contains at least a proportion $1/p$ of the elements not already covered. We obtain a residual set 
\begin{equation*}
 T_2 = T_1 \setminus \bigcup_{(\log u)^2 \le p \le y} \{t \in T_1 : t \equiv a_p \bmod p\}
\end{equation*}
whose cardinality satisfies the bound
\begin{align}
\label{eq:t2-bound}
\begin{split}
\#T_2 
& \le 
\#T_1 \prod_{(\log u)^2 \, \le \, p \, \le \, y} \left( 1 - \frac{1}{p} \right) \\
& = 
\#T_1 (1 + o(1))\frac{2\log_{2} u}{\log y} \\
& \le 
(1 + o(1))\frac{2L(\log_{2} u)^2}{(\log y)(\log u)} \\
& = 
\left(\frac{2c_0}{1 - \epsilon} + o(1)\right) \frac{u\log_{2} u}{\log u}.
\end{split}
\end{align}
Here we have used Mertens' theorem for products, \eqref{eq:t1-bound}, and the definitions in  \eqref{eq:y-def} and \eqref{eq:l-def}.

Let $A$ be the unique integer in $[0, M - 1]$ given by the Chinese remainder theorem such that
\begin{equation*}
A \equiv -a_p \bmod p \qquad \text{for every } p \mid M.
\end{equation*}
Then, by construction,
\begin{equation}
\label{eq:covering-properties}
\begin{aligned}
(A + t, M) & = 1 && \text{for } t \in T_2, \\
(A + t, M) & > 1 && \text{for } t \in [1,L] \setminus T_2.
\end{aligned}
\end{equation}

\proofstep{Step 3 (averaging over $j$)} In the classical Erd\H{o}s--Rankin construction, one now uses the remaining primes to cover the residual set one element at a time. Here we leave the elements of $T_2$ uncovered and instead control their contribution on average over a further parameter $j$.

Write
\begin{equation*}
n_{j,\, t} := jM + A + t.
\end{equation*}
Our aim is to show that for some $j \in J$, 
\begin{equation*}
\sum_{t = 1}^{L} S(n_{j,\, t}) \le n_{j, 0}.
\end{equation*}
By the definition of $f$, this will imply $f(n_{j, 0}) > L$. We shall obtain such a $j$ by bounding the above sum on average over $j \in J$.

For $j \in J$ and $0 \le t \le L$,
\begin{equation}
\label{eq:njt-size}
(1 - \delta)M^2 \le n_{j,\, t} \le (1 + o(1))M^2.
\end{equation}
The lower bound follows immediately from the definition of $J$ in \eqref{eq:j-def}. For the upper bound, we have
\begin{equation*}
n_{j,\, t} \le M^2 + M + L.
\end{equation*}
Moreover, \eqref{eq:l-def} gives $L = u^{1 + o(1)}$, while \eqref{eq:m-estimates} gives $\log M = (1 + o(1))u$. Thus $L = o(M)$, and hence
\begin{equation*}
M + L = o(M^2),
\end{equation*}
which proves the upper bound in \eqref{eq:njt-size}.

Define 
\begin{equation*}
S_1(n) := \sum_{\substack{p^a \, \| \, n \\ p \, > \, n/u^3}} ap 
\qquad \text{and} \qquad 
S_2(n) := \sum_{\substack{p^a \, \| \, n \\ p \, \le \, n/u^3}} ap,
\end{equation*}
so that we have the decomposition 
\begin{equation}
\label{eq:decomposition-of-s}
\sum_{j \in J} \sum_{t = 1}^{L} S(n_{j,\, t}) = \sum_{j \in J} \sum_{t = 1}^{L} S_1(n_{j,\, t}) + \sum_{j \in J} \sum_{t = 1}^{L} S_2(n_{j,\, t}).
\end{equation}

We first bound the second sum on the right of \eqref{eq:decomposition-of-s}. For any $n$,
\begin{equation*}
S_2(n) \le \frac{n}{u^3} \sum_{p^a \, \| \, n} a \le \frac{n\log n}{u^3\log 2}.
\end{equation*}
By \eqref{eq:njt-size} and the first estimate in \eqref{eq:m-estimates}, uniformly for $j \in J$ and $1 \le t \le L$,
\begin{equation*}
S_2(n_{j,\, t}) \ll \frac{M^2}{u^2}.
\end{equation*}
Moreover,
\begin{equation*}
\#J = \delta M + O(1) = (1 + o(1))\delta M.
\end{equation*}
Consequently,
\begin{equation}
\label{eq:s2-sum-trivial-bound}
\sum_{j \in J} \sum_{t = 1}^{L} S_2(n_{j,\, t}) \ll \frac{\delta LM^3}{u^2} = o(\delta M^3),
\end{equation}
where the last estimate follows from \eqref{eq:l-def}.

To bound the sum of $S_1$ in the decomposition \eqref{eq:decomposition-of-s}, we first consider the contribution from the covered elements $t \in [1,L] \setminus T_2$. Suppose that $S_1(n_{j,\, t}) > 0$. By \eqref{eq:njt-size} and the first estimate in \eqref{eq:m-estimates}, we have $n_{j,\, t} > u^6$ for all sufficiently large $u$. Hence every prime contributing to $S_1(n_{j,\, t})$ exceeds 
\begin{equation*} 
\frac{n_{j,\, t}}{u^3} > \sqrt{n_{j,\, t}}. 
\end{equation*} 
It follows that there is a unique such prime $q$, and that it occurs to the first power. Thus 
\begin{equation*} 
n_{j,\, t} = q\ell, \qquad \ell < u^3, 
\end{equation*} 
and, by \eqref{eq:njt-size}, 
\begin{equation} 
\label{eq:s1-large-prime} 
S_1(n_{j,\, t}) = q = \frac{n_{j,\, t}}{\ell} \le (1 + o(1)) \frac{M^2}{\ell}. 
\end{equation}

Since $t$ is covered, \eqref{eq:covering-properties} gives a prime $p \mid M$ such that $p \mid A + t$. As $p \mid jM$, we also have $p \mid n_{j,\, t}$. Moreover, $p \le u$, whereas
\begin{equation*}
q > \frac{n_{j,\, t}}{u^3} > u
\end{equation*}
for all sufficiently large $u$, by \eqref{eq:njt-size} and the first estimate in \eqref{eq:m-estimates}. Hence $p \ne q$, and from $p \mid q\ell$ it follows that $p \mid \ell$. Since every prime divisor of $M$ is at least $(\log u)^2$, we therefore have
\begin{equation*}
\ell \ge (\log u)^2.
\end{equation*}

For a positive integer $\ell \le u^3$, let 
\begin{equation*} 
\mathcal{N}(\ell) := \#\left\{ (j,t) \in \mathbb{Z}^2 : \frac{M}{2} \le j \le M,\ 1 \le t \le L,\ \frac{jM + A + t}{\ell} \text{ is prime} \right\}. 
\end{equation*} 
We shall use the following elementary sieve estimate of Erd\H{o}s, Graham, Ivi\'c and Pomerance \cite[Lemma~2]{EGIP1996}, stated here in our notation. Their proof uses $L$ only through the count of admissible $t$, and is therefore uniform in $L$ over the range needed here. 

\begin{lemma} 
\label{lem:egip-count} 
Uniformly for positive integers $\ell \le u^3$, 
\begin{equation*} 
\mathcal{N}(\ell) \ll \frac{LM\log u}{\phi(\ell)u\log_{2} u} + \frac{(\ell, M) M\log u}{\phi(\ell)u\log_{2} u}. 
\end{equation*} 
\end{lemma}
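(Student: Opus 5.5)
The plan is to fix $t$, sieve over $j$ with the Brun--Titchmarsh inequality in a short arithmetic progression, and then sum over $t$. Write $g := (\ell, M)$. For a pair $(j,t)$ to be counted we need $\ell \mid jM + A + t$. Reducing modulo $g$, and using $g \mid M$, this forces $g \mid A + t$.

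\emph{Counting $t$.} The number of $t \in [1,L]$ with $g \mid A + t$ is at most $L/g + 1$.

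\emph{The progression in $j$.} Fix such a $t$. Since $(M/g, \ell/g) = 1$, the congruence $jM \equiv -(A+t) \bmod \ell$ confines $j$ to a single residue class $j \equiv j_0 \bmod \ell/g$. Writing $j = j_0 + k\ell/g$, the condition $M/2 \le j \le M$ restricts $k$ to an interval of length
\begin{equation*}
K := \frac{gM}{2\ell} + O(1) \ge \frac{M}{2u^3},
\end{equation*}
using $\ell \le u^3$. The quantity whose primality we test is then
\begin{equation*}
m = \frac{jM + A + t}{\ell} = m_0 + k\,\frac{M}{g}.
\end{equation*}
This is a linear form in $k$ with modulus $q := M/g$, and the values of $m$ range over an interval of length $Y \asymp Kq$.

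\emph{Brun--Titchmarsh.} If $(m_0, q) > 1$, at most one $m$ in the progression is prime. Otherwise the Brun--Titchmarsh inequality for primes in an interval of length $Y$ in a progression modulo $q$ gives at most
\begin{equation*}
\frac{2Y}{\phi(q)\log(Y/q)} \ll \frac{K}{\log K}\cdot\frac{q}{\phi(q)}
\end{equation*}
primes. Since $\log K \ge \log M - 3\log u - O(1) = (1 + o(1))u$ by \eqref{eq:m-estimates}, and $q/\phi(q) \le M/\phi(M) \ll \log u/\log_2 u$ by \eqref{eq:m-estimates} (as $q \mid M$), each admissible $t$ contributes
\begin{equation*}
\ll \frac{gM}{\ell}\cdot\frac{\log u}{u\log_2 u} + 1.
\end{equation*}

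\emph{Summing over $t$.} Multiplying by $L/g + 1$ gives
\begin{equation*}
\mathcal{N}(\ell) \ll \frac{LM\log u}{\ell\, u\log_2 u} + \frac{gM\log u}{\ell\, u\log_2 u} + L + \frac{L}{g}.
\end{equation*}
The first two terms are bounded by those in the lemma because $\phi(\ell) \le \ell$. The last terms are at most $2L$. Since $\phi(\ell) \le u^3$ and $M = e^{(1+o(1))u}$, we have $L = o\bigl(LM\log u/(\phi(\ell)u\log_2 u)\bigr)$, so these are absorbed into the first term. All estimates are uniform in $\ell \le u^3$, and $L$ enters only through the count of $t$, as the paper notes.

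The main obstacle is applying Brun--Titchmarsh in the right form. It must be the interval version, for a progression of modulus $q = M/g$, which may be enormous, taken over an interval whose length $Y$ exceeds $q$ only by the factor $K$. The saving $\log(Y/q) = \log K \asymp u$ is precisely what produces the factor $1/u$ in the lemma, so I would check carefully that $K$ is always at least $M/u^{3}$, i.e. that the restriction $\ell \le u^3$ never makes the $k$-range collapse. The degenerate cases, where $(m_0, q) > 1$ or where $K$ is only of bounded size in some subrange, are handled by the crude ``$+1$ per $t$'' term.
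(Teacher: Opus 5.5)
Your argument is correct. The paper itself gives no proof of this lemma: it quotes \cite[Lemma~2]{EGIP1996} and only asserts that the argument there is uniform in $L$, so what you have written is a self-contained replacement for that citation.

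Your proof is essentially the argument the paper uses for the uncovered positions in Lemma~\ref{lem:uncovered-s1-bound}, namely one residue class for $j$, the quotient running through a progression, and Montgomery--Vaughan with saving $\log(Y/q)\asymp u$. You extend it from the case $(\ell,M)=1$ to general $g=(\ell,M)$. The constraint $g\mid A+t$ cuts the number of $t$ to $L/g+1$, while the step $\ell/g$ for $j$ lengthens the $k$-range to $\asymp gM/\ell$. The product $(L/g+1)\cdot gM/\ell$, multiplied by $\frac{M}{\phi(M)}\cdot\frac1u$, is exactly the pair of terms in the lemma.

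Compared with relying on the citation, your route has two advantages. It makes the uniformity in $L$ transparent, since $L$ enters only through the count of $t$ and the absorbed $O(L)$ term. It also yields the slightly stronger bound with $\ell$ in place of $\phi(\ell)$, which of course implies the stated form and suffices for \eqref{eq:s1-covered-bound}.

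Three cosmetic points, none affecting the conclusion:
\begin{enumerate}
\item Take $K := gM/(2\ell)+1$ as an upper bound for the number of $k$. Enlarging the interval in Brun--Titchmarsh only weakens the bound, so this is legitimate and avoids the ambiguous ``$+O(1)$''.
\item The leftover terms from expanding $(L/g+1)(X+1)$ are $L/g+1$, not $L+L/g$.
\item The coprimality $(M/g,\ell/g)=1$ holds for any two integers divided by their gcd, so no appeal to squarefreeness of $M$ is needed.
\end{enumerate}
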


We shall also use the elementary estimates
\begin{equation}
\label{eq:phi-sum-estimates}
\sum_{\ell \ge x}\frac{1}{\ell\phi(\ell)} \ll \frac{1}{x},
\qquad
\sum_{\ell \ge 1}\frac{(\ell,M)}{\ell\phi(\ell)} \ll \frac{M}{\phi(M)}.
\end{equation}
For the first, one may use partial summation with
\begin{equation*}
\sum_{\ell \le x}\frac{\ell}{\phi(\ell)} \ll x.
\end{equation*}
For the second, writing $d = (\ell, M)$ and $\ell = dk$, and using $\phi(dk) \ge \phi(d)\phi(k)$, gives
\begin{equation*}
\sum_{\ell \ge 1}\frac{(\ell,M)}{\ell\phi(\ell)}
\le
\sum_{d\mid M}\frac{1}{\phi(d)} \sum_{k\ge1}\frac{1}{k\phi(k)} 
\ll
\sum_{d\mid M}\frac{1}{\phi(d)}
=
\prod_{p \mid M}\left(1 + \frac{1}{p - 1}\right)
=
\frac{M}{\phi(M)}.
\end{equation*}

Since $\delta < 1/2$, we have $J \subseteq [M/2,M]$. Hence, combining \eqref{eq:s1-large-prime}, the lower bound $\ell \ge (\log u)^2$, Lemma~\ref{lem:egip-count}, \eqref{eq:phi-sum-estimates}, and the second estimate in \eqref{eq:m-estimates}, we obtain
\begin{align}
\label{eq:s1-covered-bound}
\begin{split}
\sum_{j \in J}
\sum_{t \in [1,L] \setminus T_2} S_1(n_{j,\, t})
& \le
(1 + o(1))M^2 \sum_{(\log u)^2 \le \ell < u^3} \frac{\mathcal{N}(\ell)}{\ell} \\
& \ll
\frac{LM^3\log u}{u\log_2u} \sum_{\ell \ge (\log u)^2} \frac{1}{\ell\phi(\ell)} \\
& \qquad
+
\frac{M^3\log u}{u\log_2u} \sum_{\ell \ge (\log u)^2} \frac{(\ell,M)}{\ell\phi(\ell)} \\
& \ll
\frac{LM^3}{u\log u\log_2u} + \frac{M^3(\log u)^2}{u(\log_2u)^2} \\
& =
o(M^3).
\end{split}
\end{align}

Indeed, by \eqref{eq:l-def},
\begin{equation*}
L \ll \frac{u\log u\log_3u}{(\log_2u)^2},
\end{equation*}
so the first term above is
\begin{equation*}
\ll M^3 \frac{\log_3u}{(\log_2u)^3} = o(M^3),
\end{equation*}
while the second is plainly $o(M^3)$.

We now consider the contribution from the uncovered elements $t \in T_2$ to the sum of $S_1$ in \eqref{eq:decomposition-of-s}. This is the point at which our argument departs from \cite{EGIP1996}. The key new ingredient is the following estimate.

\begin{lemma}
\label{lem:uncovered-s1-bound}
Uniformly for $t \in T_2$,
\begin{equation}
\label{eq:uncovered-s1-bound}
\sum_{j \in J} S_1(n_{j,\, t}) \le \left(\zeta(2) + o(1)\right) \frac{\delta M^3\log u}{u\log_2 u}.
\end{equation}
\end{lemma}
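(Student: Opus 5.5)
For fixed $t \in T_2$, I plan to decompose $\sum_{j \in J} S_1(n_{j,t})$ according to the cofactor $\ell$, exactly as for the covered positions. The difference is that I will replace Lemma~\ref{lem:egip-count} by a sharp Brun--Titchmarsh bound applied for one fixed $t$, so that only the constant matters. As shown before \eqref{eq:s1-large-prime}, whenever $S_1(n_{j,t})>0$ we have $n_{j,t}=q\ell$ with $q$ prime, $\ell<u^3$, and $S_1(n_{j,t}) = q \le (1+o(1))M^2/\ell$. Hence
\begin{equation*}
\sum_{j\in J} S_1(n_{j,t}) \le (1+o(1))M^2 \sum_{\ell<u^3} \frac{\mathcal{N}_t(\ell)}{\ell},
\qquad
\mathcal{N}_t(\ell) := \#\{ j\in J : \ell \mid n_{j,t},\ n_{j,t}/\ell \text{ prime}\}.
\end{equation*}
The case $\ell=1$, in which $n_{j,t}$ is itself prime, is included.

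\emph{Structure of the count.} Since $t\in T_2$, \eqref{eq:covering-properties} gives $(A+t,M)=1$. Because $n_{j,t}\equiv A+t \bmod M$, every $n_{j,t}$ is coprime to $M$. Therefore $\mathcal{N}_t(\ell)=0$ unless $(\ell,M)=1$, and we may assume this. The condition $\ell\mid jM+A+t$ then confines $j$ to a single residue class $j\equiv j_0 \bmod \ell$. Writing $j=j_0+\ell k$, the quotient is $m=(jM+A+t)/\ell = m_0+Mk$. So $m$ runs through an arithmetic progression with modulus $M$ and residue $m_0$, where $(m_0,M)=1$ because both $n_{j,t}$ and $\ell$ are coprime to $M$. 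Since $j$ ranges over an interval of length $\delta M$, the values of $m$ lie in an interval of length $Y=(1+o(1))\delta M^2/\ell$.

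\emph{Brun--Titchmarsh.} I would apply the Montgomery--Vaughan form of the Brun--Titchmarsh inequality for intervals,
\begin{equation*}
\pi(X+Y;q,a)-\pi(X;q,a)\le \frac{2Y}{\phi(q)\log(Y/q)},
\end{equation*}
with $q=M$. Here $Y/M \asymp \delta M/\ell \ge \delta M/u^3$, so $\log(Y/M)=(1+o(1))u$ uniformly in $\ell<u^3$ by \eqref{eq:m-estimates}. Combining this with the second estimate in \eqref{eq:m-estimates} gives
\begin{equation*}
\mathcal{N}_t(\ell)\le (2+o(1))\frac{\delta M^2}{\ell\,\phi(M)\,u} = (1+o(1))\frac{\delta M\log u}{\ell\, u\log_2 u},
\end{equation*}
uniformly in $\ell$ and $t$.

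Inserting this bound and summing over $\ell$ gives $\sum_\ell \ell^{-2}\le \zeta(2)$, which yields \eqref{eq:uncovered-s1-bound}. The main point to check carefully is the uniformity of the Brun--Titchmarsh step. One needs the interval (not initial-segment) version, with the sharp constant $2$. One also needs to confirm that $\log(Y/q)$ is asymptotically $u$ rather than $2u$. The factor $2$ in Brun--Titchmarsh, combined with $\log(Y/q)\sim u$ and $M/\phi(M)\sim\tfrac12\log u/\log_2 u$, is exactly what produces the constant $\zeta(2)$.
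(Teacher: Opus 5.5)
Your proposal is correct and follows essentially the same route as the paper. You split by the cofactor $\ell<u^3$, use $(A+t,M)=1$ to force $(\ell,M)=1$, reduce to primes in a single progression modulo $M$ with coprime residue in an interval of length $(1+o(1))\delta M^2/\ell$, apply the Montgomery--Vaughan Brun--Titchmarsh inequality with $\log(Y/M)=(1+o(1))u$, and sum $\ell^{-2}$ to obtain $\zeta(2)$. The only cosmetic difference is that the paper takes the interval length to be $(\delta M/\ell+1)M$, so that it visibly contains every term of the progression; your factor $1+o(1)$ already absorbs this.
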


\begin{proof}
Fix $t \in T_2$. If $S_1(n_{j,\, t}) > 0$, then, as above, there is a unique prime $q$ contributing to $S_1(n_{j,\, t})$, it occurs to the first power, and
\begin{equation*}
n_{j,\, t} = q\ell,\qquad \ell < u^3.
\end{equation*}
Moreover,
\begin{equation*}
S_1(n_{j,\, t}) = q = \frac{n_{j,\, t}}{\ell} \le (1 + o(1)) \frac{M^2}{\ell},
\end{equation*}
uniformly for $j \in J$ and $t \in T_2$.

Since $t$ is uncovered, \eqref{eq:covering-properties} gives
\begin{equation*}
(n_{j,\, t},M) = (A + t,M) = 1.
\end{equation*}
It follows from $n_{j,\, t} = q\ell$ that
\begin{equation*}
(\ell,M) = 1.
\end{equation*}

Fix a positive integer $\ell < u^3$ with $(\ell, M) = 1$. Since $M$ is invertible modulo $\ell$, the divisibility condition
\begin{equation*}
\ell \mid jM + A + t
\end{equation*}
restricts $j$ to a single residue class modulo $\ell$. If no $j \in J$ lies in this class there is nothing to bound for this $\ell$, so we may assume otherwise and take $j_{\ell}$ to be the least such $j$. Since $J$ is a set of consecutive integers, the values of $j \in J$ in this residue class are then precisely
\begin{equation*}
j = j_{\ell} + \ell m, \qquad 0 \le m \le m_{\ell},
\end{equation*}
for some nonnegative integer $m_{\ell}$. As $j_{\ell} \ge (1 - \delta)M$ and $j_{\ell} + \ell m_{\ell} \le M$, we have $\ell m_{\ell} \le \delta M$, so $m$ runs through at most
\begin{equation*}
\frac{\delta M}{\ell} + 1
\end{equation*}
consecutive integers. For such $j$, the quotient $n_{j,\, t}/\ell$ is an integer, and
\begin{equation*}
\frac{n_{j,\, t}}{\ell} = \frac{j_{\ell} M + A + t}{\ell} + mM.
\end{equation*}
Thus the integers $n_{j,\, t}/\ell$ arising from $j \in J$ in this residue class lie in an arithmetic progression with modulus $M$, whose initial term $(j_{\ell} M + A + t)/\ell$ is a positive integer. The values of $q$ to be counted are prime terms of this progression.

Put
\begin{equation*}
b_{\ell} := \frac{j_{\ell} M + A + t}{\ell}.
\end{equation*}
Since
\begin{equation*}
\ell b_{\ell} \equiv A + t \bmod M
\end{equation*}
and both $\ell$ and $A + t$ are coprime to $M$, we have
\begin{equation*}
(b_{\ell}, M) = 1.
\end{equation*}
The terms $b_{\ell} + mM$ with $0 \le m \le m_{\ell}$ are contained in an interval of length
\begin{equation*}
Y_{\ell} := \left(\frac{\delta M}{\ell} + 1\right)M.
\end{equation*}
Uniformly for $\ell < u^3$,
\begin{equation*}
Y_{\ell} = (1 + o(1))\frac{\delta M^2}{\ell}
\end{equation*}
and
\begin{equation*}
\log\left(\frac{Y_{\ell}}{M}\right) = \log\left(\frac{\delta M}{\ell} + 1\right) = (1 + o(1))u,
\end{equation*}
by the first estimate in \eqref{eq:m-estimates}. In particular, $Y_{\ell} > M$ for all sufficiently large $u$.

The Brun--Titchmarsh theorem of Montgomery and Vaughan \cite[Theorem~2]{MV1973} therefore shows that the number of primes $q$ arising for this fixed $\ell$ is at most
\begin{equation*}
\frac{2Y_{\ell}} {\phi(M)\log(Y_{\ell}/M)} \le (2 + o(1)) \frac{\delta M^2}{\ell\phi(M)u}.
\end{equation*}
Multiplying by the bound
\begin{equation*}
S_1(n_{j,\, t}) \le (1 + o(1))\frac{M^2}{\ell}
\end{equation*}
and summing over $\ell$, we obtain
\begin{align*}
\sum_{j \in J} S_1(n_{j,\, t})
& \le
(2 + o(1)) \frac{\delta M^4}{\phi(M)u} \sum_{\substack{\ell < u^3 \\ (\ell, M) = 1}}\frac{1}{\ell^2} \\
& \le
(2\zeta(2) + o(1)) \frac{\delta M}{\phi(M)}\cdot \frac{M^3}{u} \\
& =
\left(\zeta(2) + o(1)\right) \frac{\delta M^3\log u}{u\log_{2} u},
\end{align*}
where in the last line we used the second estimate in \eqref{eq:m-estimates}. This proves \eqref{eq:uncovered-s1-bound}.
\end{proof}

Summing Lemma~\ref{lem:uncovered-s1-bound} over $t \in T_2$ and using \eqref{eq:t2-bound}, we obtain
\begin{align}
\label{eq:s1-uncovered-bound}
\begin{split}
\sum_{j \in J}\sum_{t \in T_2} S_1(n_{j,\, t})
& \le
\#T_2 \left(\zeta(2) + o(1)\right) \frac{\delta M^3\log u}{u\log_{2} u} \\
& \le
\left( \frac{2\zeta(2)c_0}{1 - \epsilon} + o(1) \right) \delta M^3 \\
& =
\left(1 - \epsilon + o(1)\right)\delta M^3,
\end{split}
\end{align}
where in the last line we used
\begin{equation*}
2\zeta(2)\frac{3}{\pi^2} = 1
\end{equation*}
and the definition of $c_0$ in \eqref{eq:c0-def}.

Combining \eqref{eq:s2-sum-trivial-bound}, \eqref{eq:s1-covered-bound}, and \eqref{eq:s1-uncovered-bound}, and recalling that $\delta$ is fixed, we conclude that
\begin{equation}
\label{eq:total-s-bound}
\sum_{j \in J} \sum_{t = 1}^{L} S(n_{j,\, t}) \le \left(1 - \epsilon + o(1)\right) \delta M^3.
\end{equation}
In contrast, the corresponding argument of Erd\H{o}s, Graham, Ivi\'c and Pomerance gives an $o(M^3)$ bound for the total sum. Our larger choice of $L$ leads instead to a contribution of full order $M^3$; the saving is instead in the constant, since $(1 - \epsilon)\delta < (1 - \delta)\delta$.

Call $j \in J$ \emph{bad} if
\begin{equation*}
\sum_{t = 1}^{L} S(n_{j,\, t}) > n_{j, 0} = jM + A.
\end{equation*}
Since $j \in J$ and $A \ge 0$, every bad $j$ contributes more than
\begin{equation*}
(1 - \delta)M^2
\end{equation*}
to the left-hand side of \eqref{eq:total-s-bound}. Hence the number of bad $j$ is at most
\begin{equation*}
\left(\frac{1 - \epsilon}{1 - \delta} + o(1)\right) \delta M.
\end{equation*}
Since $\delta = \epsilon/2$ by \eqref{eq:delta-def}, we have
\begin{equation*}
\frac{1 - \epsilon}{1 - \delta} < 1.
\end{equation*}
On the other hand,
\begin{equation*}
\#J = \delta M + O(1).
\end{equation*}
It follows that, for all sufficiently large $u$, not every $j \in J$ is bad. Thus there is some $j \in J$ for which
\begin{equation*}
\sum_{t = 1}^{L} S(n_{j,\, t}) \le n_{j, 0}.
\end{equation*}
Putting
\begin{equation*}
n := n_{j, 0} = jM + A,
\end{equation*}
we obtain from the definition of $f$ that
\begin{equation}
\label{eq:f-greater-than-l}
f(n) > L.
\end{equation}

It remains to express $L$ in terms of $n$. Since
\begin{equation*}
(1 - \delta) M^2\le n < M^2 + M,
\end{equation*}
we have
\begin{equation*}
\log n = (2 + o(1)) \log M = (2 + o(1))u
\end{equation*}
by the first estimate in \eqref{eq:m-estimates}. Hence
\begin{equation*}
u = \left(\frac{1}{2} + o(1)\right) \log n.
\end{equation*}
It follows from \eqref{eq:l-def} that
\begin{equation}
\label{eq:l-in-terms-of-n}
L = \left(\frac{c_0}{2} + o(1)\right) \frac{(\log n)(\log_{2} n)(\log_{4} n)}{(\log_{3} n)^2}.
\end{equation}
Moreover,
\begin{equation*}
\frac{c_0}{2} = \frac{3(1 - \epsilon)^2}{2\pi^2} > \frac{3}{2\pi^2} - \epsilon,
\end{equation*}
since
\begin{equation*}
\frac{3(1 - \epsilon)^2}{2\pi^2} - \left(\frac{3}{2\pi^2} - \epsilon\right) = \epsilon \left(1 - \frac{3}{\pi^2}\right) + \frac{3\epsilon^2}{2\pi^2} > 0.
\end{equation*}
Thus, for all sufficiently large $u$, \eqref{eq:f-greater-than-l} and \eqref{eq:l-in-terms-of-n} give
\begin{equation*}
f(n) \ge \left(\frac{3}{2\pi^2} - \epsilon\right) \frac{(\log n)(\log_{2} n)(\log_{4} n)}{(\log_{3} n)^2}.
\end{equation*}
Finally, $n \to \infty$ as $u \to \infty$, so this holds for infinitely many $n$. The proposition follows.

\section{AI provenance and verification}
\label{sec:ai-provenance}

\subsection*{Provenance of the argument}
On 8~September~2026 the author submitted to Claude Fable~5.1 (Anthropic; model string \texttt{claude-fable-5-1}), through the Claude web interface, the problem of improving the lower bound for $f(n)$ in the theorem of Erd\H{o}s, Graham, Ivi\'c and Pomerance discussed above. The prompt included an excerpt of \cite{EGIP1996} containing their theorem and proof, as well as an excerpt from the generative-AI preprint \cite{OAI2026}. In response, Claude Fable~5.1 formulated the improvement stated here as Proposition~\ref{prop:f-bound} and supplied a proof. The prompt and response are supplied as the ancillary files \texttt{prompt.txt} and \texttt{response.txt}, and Appendix~\ref{app:record} reproduces the parts of the session relevant to the argument.

Most of that proof follows the argument of Erd\H{o}s, Graham, Ivi\'c and Pomerance: in particular, the Erd\H{o}s--Rankin sieving construction, the decomposition $S = S_1 + S_2$, and the treatment of the covered positions are adaptations of their proof. The principal new ingredient in the generated argument is the decision to leave the final set $T_2$ of surviving positions uncovered and instead average their contribution over the parameter $j$. The corresponding estimate is Lemma~\ref{lem:uncovered-s1-bound}. Together with the surrounding averaging argument, this permits the larger choice of $L$ and yields Proposition~\ref{prop:f-bound}.

The response presents this step as an adaptation of the preprint \cite{OAI2026}, describing it as a transfer of the idea of leaving the last surviving positions uncovered and allowing a free translation parameter to absorb them. The two arguments share that idea but diverge in what the free parameter must deliver: in \cite{OAI2026} every surviving position must be made composite, which requires a second-moment argument and auxiliary primes not dividing the modulus, whereas here it suffices that $\sum_{t \le L} S(n_{j,\, t})$ be small on average, for which a first moment over $j$ suffices. The proof given here therefore uses no result of \cite{OAI2026}. The response also declined to rely on \cite[Proposition~1.2]{OAI2026}, and hence on \cite[Theorem~1.1]{OAI2026} itself; see Appendix~\ref{app:record}. The model's account of the idea's origin should be read together with the second session described below.

The author selected the problem and the source material supplied to the model. After receiving the generated argument, the author checked the proof, rewrote it in his own exposition, changed notation where appropriate, supplied additional details, and verified the external results used in the final argument. The deduction of Theorem~\ref{thm:main} from Proposition~\ref{prop:f-bound} was reconstructed from the corresponding argument of Erd\H{o}s, Graham, Ivi\'c and Pomerance and formulated here as the more general transfer principle of Proposition~\ref{prop:transfer}.

Since the model presents its argument as an adaptation of an idea drawn from \cite{OAI2026}, it is natural to ask whether a comparable improvement would have been produced from the excerpt of \cite{EGIP1996} alone. To test this, on 11~September~2026 the author submitted a second prompt, in a fresh session with no access to the first, containing the excerpt of \cite{EGIP1996} and no other source, and asking for an improvement to the lower bound for $f(n)$ without reference to \cite{OAI2026}. The model produced a purported proof of a lower bound of the same order as Proposition~\ref{prop:f-bound}, with the smaller constant $1/300$ in place of $3/(2\pi^2) - \epsilon$, by a different argument.

Rather than leaving positions uncovered, the second argument covers every position and replaces Lemma~\ref{lem:egip-count} by a weighted estimate in which the covering enters through
\begin{equation*}
\sum_{t \le L} \frac{\phi(d_t)}{d_t^2}, \qquad d_t := (A + t, M),
\end{equation*}
with a probabilistic construction used to make this weight small. The uncovered positions of Section~\ref{sec:proof-f-bound} correspond to the case $d_t = 1$. Thus the second session provides evidence that the model could obtain an improvement of this order without being supplied with \cite{OAI2026}. The author has not verified the second argument in detail; no claim concerning its correctness is made here, and it is not used in this paper. The second prompt and response are supplied as the ancillary files \texttt{prompt-02.txt} and \texttt{response-02.txt}.

\subsection*{Verification}
The generated proof was not incorporated into the paper without further checking. The author checked the argument line by line, rewrote it in his own notation and exposition, and supplied details where the generated response was compressed or incomplete. In particular, the parameter choices and their ranges, the estimates for the successive sieving stages, the decomposition $S= S_1 + S_2$, the treatment of the covered positions, the estimate for the uncovered positions in Lemma~\ref{lem:uncovered-s1-bound}, the averaging over $j$, and the final conversion from the auxiliary parameter $u$ to $n$ were checked in detail. Constants and dependencies among the asymptotic parameters were also checked throughout.

The external ingredients used in the final proof were checked against their original sources, including the relevant estimates of de~Bruijn \cite{DEB1951}, the Erd\H{o}s--Graham--Ivi\'c--Pomerance argument \cite{EGIP1996} and the auxiliary lemma on which the covered contribution is based, and the form of the Brun--Titchmarsh theorem due to Montgomery and Vaughan \cite{MV1973} used in Lemma~\ref{lem:uncovered-s1-bound}. The deduction of Theorem~\ref{thm:main} from Proposition~\ref{prop:f-bound} was reconstructed from the corresponding argument of Erd\H{o}s, Graham, Ivi\'c and Pomerance. False starts and claims appearing in the generative session that were not used in the final proof were discarded. The author therefore regards the argument printed in this paper, rather than the model output, as the mathematical object for which he takes responsibility.

\subsection*{Authorship and responsibility}
The language model is not listed as an author. The author takes full responsibility for the mathematical claims, exposition, citations, and other contents of this paper.

\subsection*{Reproducibility}
Language-model outputs are not deterministic, and models may be updated or withdrawn. The record in Appendix~\ref{app:record} documents the interaction that actually occurred; it is not a protocol that can be expected to reproduce the same output. The complete prompts and responses from both sessions are supplied as ancillary files. In the response files, mathematical notation has been converted from LaTeX markup to plain-text Unicode notation for readability, without changes to the substantive content.

\subsection*{Purpose}
This paper is in part an experiment in the use of language models for mathematical research. The author's view is that substantial use of such systems should be disclosed rather than left implicit, and that mathematical claims obtained with their assistance should be subjected to the same standard of verification as claims obtained by other means: the author must check the argument and take responsibility for what is published.

\newpage
\appendix

\section{Record of the generative session}
\label{app:record}

This appendix documents the generative-AI interaction described in Section~\ref{sec:ai-provenance}. It is not part of the mathematical argument of the paper. In particular, statements made by the language model below are reproduced as part of the historical record and are not thereby asserted to be correct. The mathematical argument for which the author takes responsibility is the argument presented in the body of the paper.

For readability, the lengthy excerpts from published or otherwise cited sources that were included in the prompt are not reproduced here. They are replaced by brief descriptions enclosed in square brackets. Apart from these replacements and the omission of portions not relevant to the development of the argument, the quoted prompt and response are reproduced as they occurred, including errors. The complete prompt and response are supplied as the ancillary plain-text files \texttt{prompt.txt} and \texttt{response.txt}. 

The response uses $\theta$ for the parameter called $\delta$ in this paper, $U$ for the uncovered set $T_2$, $i$ for the index $t$, and $n_{i,j}$ for $n_{j,\, t}$. 

The following session with Anthropic's Claude Fable~5.1 took place on 8~September~2026. In the prompt, Theorem~1.1 and Proposition~1.2 refer to results in \cite{OAI2026}.

\begin{turn}{Prompt}
Excerpts from one paper and one preprint are shown below. One might hope to improve the lower bound for f(n) given in Theorem 3 by using Theorem 1.1 or the ideas underneath it. Do this, with a theorem statement and proof, or explain why you cannot.

[Excerpt~1: the statement and proof of Theorem~3 of Erd\H{o}s, Graham, Ivi\'c and Pomerance \cite{EGIP1996}, including their Erd\H{o}s--Rankin construction, Lemma~2, the decomposition $S = S_1 + S_2$, and the proof of their lower bound for $f(n)$.]

[Excerpt~2: the statement of Theorem~1.1 and Proposition~1.2 of \cite{OAI2026}, together with the deduction of Theorem~1.1 from Proposition~1.2 in \cite{OAI2026}. The proof of \cite[Proposition~1.2]{OAI2026} was not included.]

\end{turn}

\begin{turn}{Claude Fable 5.1}
\textelp{}

The usable idea: do not cover the last survivors; leave them and control them on average over $j$.

\textelp{}

\textbf{Lemma A.} For $i \in U$,
\begin{equation*}
\sum_{j \in J} S_1(n_{i,j}) \le \left(\zeta(2) + o(1)\right) \frac{\theta M^3\log u}{u\log_2u}.
\end{equation*}

\textelp{}

[Further portions of the response develop this estimate, combine it with the Erd\H{o}s--Rankin construction and the treatment of the covered positions, and deduce the improved lower bound for $f(n)$. The complete response is supplied in \texttt{response.txt}.]
\end{turn}

\begin{turn}{Author's note}
The response declined to use Proposition~1.2 of \cite{OAI2026}, reproduced in the second excerpt above, on the grounds that only its statement had been supplied and that, in the range in which it is applied, it appeared to the model to be beyond the reach of current methods. The response also reports an unsuccessful attempt to locate a public record of \cite{OAI2026}; in fact a Lean~4 formalization, linked from that preprint, and an independent expository sketch \cite{EP4DISC} were both available at the time. The author regards the model's refusal to rely on an unsupported proposition as appropriate, and notes that the failure of retrieval, rather than of reasoning, was what left it unsupported.

The proof ultimately produced in the response does not use \cite[Proposition~1.2]{OAI2026}. Its essential new step is instead the averaging argument for the uncovered positions, represented above by Lemma~A and appearing in the paper as Lemma~\ref{lem:uncovered-s1-bound}.

A second session, on 11~September~2026, supplied the excerpt of \cite{EGIP1996} alone. It produced a purported proof of a bound of the same order, described in Section~\ref{sec:ai-provenance}. The prompt and response are supplied as \texttt{prompt-02.txt} and \texttt{response-02.txt}; they are not reproduced here.
\end{turn}

\end{document}